\numberwithin{equation}{section}
\providecommand{\definitionname}{Definition}
\providecommand{\remarkname}{Remark}
\providecommand{\theoremname}{Theorem}
\newtheorem{thm}{\protect\theoremname}
\newtheorem{defn}[thm]{\protect\definitionname}
\newtheorem{rem}[thm]{\protect\remarkname}
\newcommand{\R}{\mathbb{R}}
\newcommand{\dom}{\operatorname{dom}}
\newcommand{\ran}{\operatorname{ran}}
\newcommand{\grad}{\operatorname{grad}}
\newcommand{\curl}{\operatorname{curl}}
\newcommand{\id}{\operatorname{id}}
\newcommand{\mathS}{\mathscr{S}}
\newcommand{\tr}{\operatorname{tr}}
\newcommand{\skw}{\operatorname{skw}}
\newcommand{\sym}{\operatorname{sym}}
\newcommand{\dev}{\operatorname{dev}}
\newcommand{\dv}{\operatorname{div}}
\newcommand{\Grad}{\operatorname{Grad}}
\newcommand{\Curl}{\operatorname{Curl}}
\newcommand{\Div}{\operatorname{Div}}
\newcommand{\ed}{\operatorname{d}}
\newcommand{\cd}{\operatorname{\delta}}
\newcommand{\eps}{\varepsilon}
\def\papia{a}
\def\S{\mathbb{S}}
\def\T{\mathbb{T}}
\newcommand{\norm}[1]{\|#1\|}
\begin{document}
\title[Annihilating Skew-Selfadjoint Operators and Hilbert Complexes]
{Families of Annihilating Skew-Selfadjoint Operators\\
and their Connection to Hilbert Complexes}
\author{Dirk Pauly}
\author{Rainer Picard}
\address{Institut f\"ur Analysis, Technische Universit\"at Dresden, Germany}
\email[Dirk Pauly]{dirk.pauly@tu-dresden.de}
\email[Rainer Picard]{rainer.picard@tu-dresden.de}
\keywords{Hilbert complexes, de Rham complex, elasticity complex, biharmonic
complex, factorisation of hyperbolic problems}
\date{\today; {\it Corresponding Author}: Dirk Pauly, {\tt dirk.pauly@tu-dresden.de}}
\dedicatory{Dedicated to Alain Bossavit on the occasion of his 80th birthday}

\begin{abstract}
In this short note we show that Hilbert complexes are strongly related
to what we shall call annihilating sets of skew-selfadjoint operators.
This provides for a new perspective on the classical topic of Hilbert
complexes viewed as families of commuting normal operators.
\end{abstract}

\maketitle
\setcounter{tocdepth}{3} \tableofcontents{}


\section{Introduction}

The classical differential geometry topic of ``chain complexes''
has entered functional analysis as the topic of so-called ``Hilbert
complexes''. The purpose of this note is to inspect Hilbert complexes
from another functional analytical perspective linked to a four decades
old construction of the skew-selfadjoint extended Maxwell operator
\[
S_{\mathsf{Dir}}=\left(\begin{array}{cccc}
0 & \dv & 0 & 0\\
\mathring{\grad} & 0 & -\curl & 0\\
0 & \mathring{\curl} & 0 & \grad\\
0 & 0 & \mathring{\dv} & 0
\end{array}\right),
\]
see \cite{P1984b,P1985a}. For some pre-history and the scope of this
construction see ~\cite{PTW2017a}. We merely mention here that the
extended Maxwell system provides not only a deeper structural insight
into the system of Maxwell's equations but also shows a deep connection
to the Dirac equation. Indeed, the extended Maxwell operator has proven
to be useful in important applications such as boundary integral equations
in electrodynamics at low frequencies, see e.g. \cite{HS2022a} and
\cite{TV2007a,TV2007b,TV2007c,TV2007d}.

As it will turn out Hilbert complexes are intimately related to, indeed
generalized by, an abstract concept, which we shall refer to as annihilating
sets of skew-selfadjoint operators, which in turn is based on observations
made in connection with the extended Maxwell system. This is the subject
of the main part in Section \ref{sec:Annihilating-Skew-Selfadjoint-Op}.
Our final section, Section \ref{sec:Applications} serves to illustrate
the abstract setting by a number of more or less classical applications.

\section{\label{sec:Annihilating-Skew-Selfadjoint-Op}Annihilating Sets of
Skew-Selfadjoint Operators and Hilbert Complexes}

\subsection{Finite Sets of Annihilating Skew-Selfadjoint Operators}

We start with particular finite sets of commuting skew-selfadjoint
operators $S$ on a Hilbert space $H$, i.e., 
\[
S:\dom(S)\subset H\to H,\quad S^{*}=-S,
\]
which we shall refer to as a -- pair-wise -- annihilating set of skew-selfadjoint
operators.

\begin{defn}
A finite set $\mathS$ of skew-selfadjoint operators satisfying 
\[
\ran\left(S\right)\subseteq\ker\left(T\right),\quad S\not=T,\:S,T\in\mathS,
\]
is called an annihilating set of skew-selfadjoint operators. 
\end{defn}

For the rest of this section, let $\mathS$ be an annihilating set
of skew-selfadjoint operators.

\begin{rem}
Let $S,T\in\mathS$. 
\begin{enumerate}
\item 
$\mathS$ is a set of commuting operators. 
\item 
It holds $TS=0$ on $\dom\left(S\right)$ for $S\not=T$. 
\item 
The observation that $S=f_{S}\left(Q\right)$ for some suitable complex
valued functions $f_{S}:\R\to\mathbb{C}$ in the sense of a function
calculus associated with a single selfadjoint operator $Q$ provides
for other examples, which are not necessarily tridiagonal, cf.~Section \ref{sec:tridiag}.
\end{enumerate}
\end{rem}

As a consequence we have a straight-forward application of the projection
theorem the following generalized (orthogonal) Helmholtz decomposition.

\begin{thm}
${\displaystyle H=K\oplus_{H}\bigoplus_{S\in\mathS}\overline{\ran\left(S\right)}}$
with (generalised cohomology group) ${\displaystyle K\coloneqq\bigcap_{S\in\mathS}\ker\left(S\right)}$. 
\end{thm}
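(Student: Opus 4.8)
The plan is to reduce the statement to the classical projection theorem for a single skew-selfadjoint operator, combined with a pairwise orthogonality observation for the closed ranges. First I would record the elementary duality between range and kernel: for any densely defined closed operator the orthogonal complement of the closure of its range equals the kernel of its adjoint. For a skew-selfadjoint $S\in\mathS$ this specialises, using $S^{*}=-S$, to
\[
\overline{\ran(S)}^{\perp}=\ker(S^{*})=\ker(S).
\]
Applied to a single operator this already yields the decomposition $H=\overline{\ran(S)}\oplus_{H}\ker(S)$, and the whole theorem will follow by iterating this across the finite family in an orthogonal fashion.

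Next I would exploit the annihilating condition to show that the closed ranges are pairwise orthogonal. For $S\neq T$ the defining inclusion $\ran(S)\subseteq\ker(T)=\overline{\ran(T)}^{\perp}$ immediately gives $\ran(S)\perp\ran(T)$, and passing to closures yields $\overline{\ran(S)}\perp\overline{\ran(T)}$. Because $\mathS$ is finite, this pairwise orthogonality guarantees that the sum $\bigoplus_{S\in\mathS}\overline{\ran(S)}$ is itself a closed subspace of $H$; here the finiteness of $\mathS$ is what makes the orthogonal sum closed without an additional closure.

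The final step is the computation of the orthogonal complement. Using that the orthogonal complement of a finite sum is the intersection of the complements, together with the identity from the first step, I obtain
\[
\Bigl(\bigoplus_{S\in\mathS}\overline{\ran(S)}\Bigr)^{\perp}=\bigcap_{S\in\mathS}\overline{\ran(S)}^{\perp}=\bigcap_{S\in\mathS}\ker(S)=K.
\]
The projection theorem then furnishes the asserted orthogonal decomposition $H=K\oplus_{H}\bigoplus_{S\in\mathS}\overline{\ran(S)}$. I expect the only genuinely delicate point to be the closedness of the orthogonal sum of ranges; once that is secured, the remainder is a routine application of the projection theorem and the adjoint range/kernel duality, with the skew-selfadjointness $S^{*}=-S$ doing the essential work of identifying $\ker(S^{*})$ with $\ker(S)$.
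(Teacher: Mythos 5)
Your proof is correct and follows exactly the route the paper intends: the paper states this result as ``a straight-forward application of the projection theorem'' without further detail, and your argument --- identifying $\overline{\ran(S)}^{\perp}=\ker(S^{*})=\ker(S)$, deducing pairwise orthogonality of the closed ranges from the annihilating condition, noting closedness of the finite orthogonal sum, and then applying the projection theorem --- is precisely the fleshed-out version of that argument. No discrepancies or gaps.
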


\subsection{A Special Case: Tridiagonal Operator Matrices}
\label{sec:tridiag}

We consider operator matrices of the form 
\begin{align*}
A\coloneqq\sum_{k=1}^{N}A_{k} & =\left(\begin{array}{ccccc}
0 & 0 & \cdots & \cdots & 0\\
\papia_{1} & 0 & \ddots &  & \vdots\\
0 & \ddots & \ddots & \ddots & \vdots\\
\vdots & \ddots & \ddots & 0 & 0\\
0 & \cdots & 0 & \papia_{N} & 0
\end{array}\right), & A_{k} & \coloneqq\left(\begin{array}{cccccccc}
0 & 0 & \cdots & \cdots & \cdots & \cdots & \cdots & 0\\
0 & \ddots & \ddots &  &  &  &  & \vdots\\
\vdots & \ddots & \ddots & 0 &  &  &  & \vdots\\
\vdots &  & 0 & 0 & 0 &  &  & \vdots\\
\vdots &  &  & \papia_{k} & 0 & 0 &  & \vdots\\
\vdots &  &  &  & 0 & \ddots & \ddots & \vdots\\
\vdots &  &  &  &  & \ddots & \ddots & 0\\
0 & \cdots & \cdots & \cdots & \cdots & \cdots & 0 & 0
\end{array}\right)
\end{align*}
closed and densely defined on a Cartesian product $H=H_{1}\times\cdots\times H_{N+1}$
of Hilbert spaces $H_{k}$. Then 
\begin{align*}
A^{*}=\sum_{k=1}^{N}A_{k}^{*} & =\left(\begin{array}{ccccc}
0 & \papia_{1}^{*} & 0 & \cdots & 0\\
0 & 0 & \ddots & \ddots & \vdots\\
\vdots & \ddots & \ddots & \ddots & 0\\
\vdots &  & \ddots & 0 & \papia_{N}^{*}\\
0 & \cdots & \cdots & 0 & 0
\end{array}\right), & A_{k}^{*} & =\left(\begin{array}{cccccccc}
0 & 0 & \cdots & \cdots & \cdots & \cdots & \cdots & 0\\
0 & \ddots & \ddots &  &  &  &  & \vdots\\
\vdots & \ddots & \ddots & 0 &  &  &  & \vdots\\
\vdots &  & 0 & 0 & \papia_{k}^{*} &  &  & \vdots\\
\vdots &  &  & 0 & 0 & 0 &  & \vdots\\
\vdots &  &  &  & 0 & \ddots & \ddots & \vdots\\
\vdots &  &  &  &  & \ddots & \ddots & 0\\
0 & \cdots & \cdots & \cdots & \cdots & \cdots & 0 & 0
\end{array}\right).
\end{align*}

Now let 
\[
S\coloneqq2\skw A=A-A^{*}=\sum_{k=1}^{N}S_{k},\qquad S_{k}\coloneqq2\skw A_{k}=A_{k}-A_{k}^{*}.
\]
Then
\begin{align*}
S & =\left(\begin{array}{ccccc}
0 & -\papia_{1}^{*} & 0 & \cdots & 0\\
\papia_{1} & 0 & \ddots & \ddots & \vdots\\
0 & \ddots & \ddots & \ddots & 0\\
\vdots & \ddots & \ddots & 0 & -\papia_{N}^{*}\\
0 & \cdots & 0 & \papia_{N} & 0
\end{array}\right), & S_{k} & =\left(\begin{array}{cccccccc}
0 & 0 & \cdots & \cdots & \cdots & \cdots & \cdots & 0\\
0 & \ddots & \ddots &  &  &  &  & \vdots\\
\vdots & \ddots & \ddots & 0 &  &  &  & \vdots\\
\vdots &  & 0 & 0 & -\papia_{k}^{*} &  &  & \vdots\\
\vdots &  &  & \papia_{k} & 0 & 0 &  & \vdots\\
\vdots &  &  &  & 0 & \ddots & \ddots & \vdots\\
\vdots &  &  &  &  & \ddots & \ddots & 0\\
0 & \cdots & \cdots & \cdots & \cdots & \cdots & 0 & 0
\end{array}\right)
\end{align*}
are skew-selfadjoint
and we have the following main result. 

\begin{thm}
\label{thm:hilcomset} 
$\mathS:=\left\{ S_{1},\dots,S_{N}\right\} $
is an annihilating set of skew-selfadjoint operators if and only if
$(\papia_{1},\dots,\papia_{N})$ is a Hilbert complex, i.e., for all
$k=1,\dots,N$ 
\[
\papia_{k}:\dom\left(\papia_{k}\right)\subseteq H_{k}\to H_{k+1}
\]
are closed and densely defined linear operators satisfying $\ran\left(\papia_{k}\right)\subseteq\ker\left(\papia_{k+1}\right)$,
$k=1,\ldots,N-1$. 
\end{thm}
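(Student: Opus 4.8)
The plan is to prove the biconditional by directly computing the ranges and kernels of the block operators $S_{k}$ and matching the annihilating conditions against the complex property. First I would record that, for $x=(x_{1},\dots,x_{N+1})\in H$, the only nonzero components of $S_{k}x$ sit in positions $k$ and $k+1$, namely $(S_{k}x)_{k}=-\papia_{k}^{*}x_{k+1}$ and $(S_{k}x)_{k+1}=\papia_{k}x_{k}$. Consequently $\ran(S_{k})$ consists of vectors supported on the two coordinates $k,k+1$ with $k$-entry in $\ran(\papia_{k}^{*})$ and $(k+1)$-entry in $\ran(\papia_{k})$, while $x\in\ker(S_{k})$ means precisely $x_{k}\in\ker(\papia_{k})$ and $x_{k+1}\in\ker(\papia_{k}^{*})$, the remaining coordinates being unconstrained subject to $x\in\dom(S_{k})$. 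Since the skew-selfadjointness of each $S_{k}$ is already granted above, the statement reduces to a comparison of these supports and of the operators $\papia_{k},\papia_{k}^{*}$.

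Next I would split the annihilating conditions $\ran(S_{j})\subseteq\ker(S_{k})$, $j\neq k$, according to $|j-k|$. When $|j-k|\geq2$ the active blocks $\{j,j+1\}$ and $\{k,k+1\}$ are disjoint, so every $y\in\ran(S_{j})$ vanishes in positions $k,k+1$; hence $y\in\dom(S_{k})$ and $S_{k}y=0$ automatically, and such pairs impose nothing. The content lies in the two adjacent cases. For $y=S_{j}x$ one computes $S_{j+1}y$ from $y_{j+1}=\papia_{j}x_{j}$ and $y_{j+2}=0$, obtaining $(S_{j+1}y)_{j+2}=\papia_{j+1}\papia_{j}x_{j}$; thus $\ran(S_{j})\subseteq\ker(S_{j+1})$ says exactly that $\papia_{j}x_{j}\in\dom(\papia_{j+1})$ and $\papia_{j+1}\papia_{j}x_{j}=0$ for all $x_{j}\in\dom(\papia_{j})$, i.e.\ $\ran(\papia_{j})\subseteq\ker(\papia_{j+1})$. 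Symmetrically, reading off $y_{j}=-\papia_{j}^{*}x_{j+1}$ gives $(S_{j-1}y)_{j-1}=\papia_{j-1}^{*}\papia_{j}^{*}x_{j+1}$, so $\ran(S_{j})\subseteq\ker(S_{j-1})$ amounts to $\ran(\papia_{j}^{*})\subseteq\ker(\papia_{j-1}^{*})$.

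It remains to reconcile the two families of conditions. Here I would invoke the duality $\ker(c^{*})=\ran(c)^{\perp}$, valid for any densely defined $c$: for closed densely defined $\papia,b$ with composable ranges, $\ran(\papia)\subseteq\ker(b)$ holds if and only if $\ran(b^{*})\subseteq\ker(\papia^{*})$, since $\ran(\papia)\subseteq\ker(b)$ forces $\langle\papia u,b^{*}z\rangle=\langle b\,\papia u,z\rangle=0$ for all $u\in\dom(\papia)$ and $z\in\dom(b^{*})$, whence $b^{*}z\in\ran(\papia)^{\perp}=\ker(\papia^{*})$, and the converse follows from $\papia^{**}=\papia$, $b^{**}=b$. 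Applying this with $\papia=\papia_{j}$, $b=\papia_{j+1}$ shows the downward conditions $\ran(\papia_{j+1}^{*})\subseteq\ker(\papia_{j}^{*})$ coincide with the upward ones $\ran(\papia_{j})\subseteq\ker(\papia_{j+1})$. Hence the whole annihilating property collapses to $\ran(\papia_{k})\subseteq\ker(\papia_{k+1})$ for $k=1,\dots,N-1$, which is the Hilbert complex condition, and the equivalence follows.

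I expect the main obstacle to be the careful bookkeeping of domains in the adjacent cases: one must check that membership in $\ker(S_{j\pm1})$ genuinely encodes both the inclusion $\ran(\papia_{j})\subseteq\dom(\papia_{j+1})$ (respectively $\ran(\papia_{j}^{*})\subseteq\dom(\papia_{j-1}^{*})$) and the vanishing of the composition, and then that the duality lemma correctly identifies the two adjacent conditions as one and the same.
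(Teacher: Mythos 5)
Your proof is correct and follows essentially the same route as the paper: a direct block computation showing that only adjacent pairs $(S_{j},S_{j\pm1})$ impose conditions, together with adjoint duality (your lemma $\ran(\papia)\subseteq\ker(b)\Leftrightarrow\ran(b^{*})\subseteq\ker(\papia^{*})$ plays the role of the paper's observation $(S_{k}S_{\ell})^{*}\supset S_{\ell}S_{k}$) to identify the upward and downward conditions. If anything, your write-up is more complete than the paper's, which only sketches the calculation for $N=3$ via the products $S_{k}S_{\ell}$, whereas you handle general $N$ with full domain bookkeeping.
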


\begin{proof}
The result follows by a straightforward calculation. See Appendix
\ref{app:proofs} for more details. 
\end{proof}

\begin{rem}
\label{rem:hilcomseq0} 
As $(S_{k}S_{\ell})^{*}\supset S_{\ell}^{*}S_{k}^{*}=S_{\ell}S_{k}$
we see that $\mathS=\left\{ S_{1},\dots,S_{N}\right\} $ is an annihilating
set of skew-selfadjoint operators if and only if $S_{k}S_{\ell}=0$
for all $1\leq k<\ell\leq N$
(if and only if $S_{\ell}S_{k}=0$ for all $1\leq k<\ell\leq N$). 
\end{rem}

\begin{rem}
\label{rem:hilcomseq1} 
Often a Hilbert complex $(\papia)\coloneqq\left(\papia_{1},\dots,\papia_{N}\right)$
is written as 
\[
H_{1}\xrightarrow{\papia_{1}}\cdots\xrightarrow{\papia_{k-1}}H_{k}\xrightarrow{\papia_{k}}H_{k+1}\xrightarrow{\papia_{k+1}}\cdots\xrightarrow{\papia_{N}}H_{N+1}.
\]
It is noteworthy that the Hilbert complex $\left(a\right)$ is equivalently
turned into the property 
\begin{align*}
\ran\left(A_{k}\right)&\subseteq\ker\left(A_{j}\right)\\
\intertext{or}
\ran\left(S_{k}\right)&\subseteq\ker\left(S_{j}\right),\quad
j\neq k,
\end{align*}
for all $j,k=1,\ldots,N$ where the sequential character of Hilbert
complexes seems to have disappeared. Theorem \ref{thm:hilcomset}
suggests to consider annihilating sets of skew-selfadjoint operators
as an appropriate generalization of Hilbert complexes. 
\end{rem}

\begin{rem}
Note that, if preferred, the set $\mathS$ may be considered as 
\begin{enumerate}
\item 
a set of homomorphisms by restriction of the elements to their respective
domains, i.e., 
\[
\mathS_{\mathsf{hom}}\coloneqq\left\{ \widetilde{S}_{1},\dots,\widetilde{S}_{N}\right\} ,
\]
where $\widetilde{S}_{k}\coloneqq S_{k}\iota_{\dom\left(S_{k}\right)}:\dom(S_{k})\to H$
are now bounded linear operators, 
\item 
a set of bounded isomorphisms by restriction of the elements to their
respective domains and orthogonal complements of their kernels (and
projections onto the ranges), i.e., 
\[
\mathS_{\mathsf{iso}}\coloneqq\left\{ \widehat{S}_{1},\dots,\widehat{S}_{N}\right\} ,
\]
where $\widehat{S}_{k}\coloneqq\iota_{\ran\left(S_{k}\right)}^{*}S_{k}\iota_{\dom\left(S_{k}\right)\cap\ker\left(S_{k}\right)^{\bot_{H}}}:\dom\left(S_{k}\right)\cap\ker\left(S_{k}\right)^{\bot_{H}}\to\ran(S_{k})$
are now bounded and bijective, 
\item 
a set of topological isomorphisms $\mathS_{\mathsf{iso}}$ if all
ranges $\ran(S_{k})$ are closed. Note that in this case we have $\ran(S_{k})=\ker\left(S_{k}\right)^{\bot_{H}}$
and that $\ran(S_{k})$ is closed if and only if $\ran(a_{k})$ is
closed. 
\end{enumerate}
\end{rem}

In the latter remark $\iota_{X}$ denotes the embedding of the subspace
$X$ into $H$. If $X$ is closed in $H$ the orthonormal projector
onto $X$ is given by $\pi_{X}:=\iota_{X}\iota_{X}^{*}:H\to H$.

\begin{rem}
\label{rem:domkerran} 
For consistency we set $a_{0}:=0$ and $a_{N+1}:=0$.
Note that 
\begin{align*}
\dom(S) & =\bigtimes_{k=1}^{N+1}\left(\dom(\papia_{k})\cap\dom(\papia_{k-1}^{*})\right)\\
 & =\dom(\papia_{1})\times\left(\dom(\papia_{2})\cap\dom(\papia_{1}^{*})\right)\times\cdots\times\left(\dom(\papia_{N})\cap\dom(\papia_{N-1}^{*})\right)\times\dom(\papia_{N}^{*})\intertext{and\:that\:by\:the\:complex\:property}\ker(S) & =\bigtimes_{k=1}^{N+1}\left(\ker(\papia_{k})\cap\ker(\papia_{k-1}^{*})\right)\\
 & =\ker(\papia_{1})\times\left(\ker(\papia_{2})\cap\ker(\papia_{1}^{*})\right)\times\cdots\times\left(\ker(\papia_{N})\cap\ker(\papia_{N-1}^{*})\right)\times\ker(\papia_{N}^{*}),\\
\ran(S) & =\bigtimes_{k=1}^{N+1}\left(\ran(\papia_{k-1})\oplus_{H_{k}}\ran(\papia_{k}^{*})\right)\\
 & =\ran(\papia_{1}^{*})\times\left(\ran(\papia_{1})\oplus_{H_{2}}\ran(\papia_{2}^{*})\right)\times\cdots\times\left(\ran(\papia_{N-1})\oplus_{H_{N}}\ran(\papia_{N}^{*})\right)\times\ran(\papia_{N}).
\end{align*}
In particular, the product of the cohomology groups $K_{k}\coloneqq\ker(\papia_{k})\cap\ker(\papia_{k-1}^{*})$
equals the kernel of $S$. 
\end{rem}

\begin{defn}
Recall Remark \ref{rem:hilcomseq1}. A Hilbert complex $(\papia)$
is called 
\begin{enumerate}
\item 
closed if all ranges $\ran(\papia_{k})$ are closed. 
\item 
compact if all embeddings $\dom(\papia_{k})\cap\dom(\papia_{k-1}^{*})\hookrightarrow H_{k}$
are compact.\textbf{ } 
\end{enumerate}
\end{defn}

\begin{thm}
\label{theo:Siffa} 
Recall Theorem \ref{thm:hilcomset}. 
Let $(\papia)$
be a Hilbert complex with associated annihilating set of skew-selfadjoint
operators $\mathS$. Then $(a)$ is 
\begin{enumerate}
\item 
closed if and only if $\ran(S)$ is closed. 
\item 
compact if and only if the embedding $\dom(S)\hookrightarrow H$ is
compact. 
\end{enumerate}
\end{thm}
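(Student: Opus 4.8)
The plan is to reduce both equivalences to componentwise statements by means of the explicit descriptions of $\dom(S)$ and $\ran(S)$ recorded in Remark \ref{rem:domkerran}, and then to invoke two elementary principles: a finite product of subspaces is closed iff each factor is closed, and a finite direct sum of operators is compact iff each summand is compact.

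For (1), recall from Remark \ref{rem:domkerran} that $\ran(S)$ is the Cartesian product of the spaces $\ran(\papia_{k-1})\oplus_{H_k}\ran(\papia_k^*)$, $k=1,\dots,N+1$, where $\papia_0=\papia_{N+1}=0$. Since such a finite product is closed in $H$ iff every factor is closed, I would first reduce to closedness of each orthogonal sum $\ran(\papia_{k-1})\oplus_{H_k}\ran(\papia_k^*)$; here the two summands are indeed orthogonal because the complex property gives $\ran(\papia_{k-1})\subseteq\ker(\papia_k)=\ran(\papia_k^*)^{\bot_{H_k}}$. An elementary Pythagorean argument then shows that an orthogonal sum of two subspaces is closed iff both summands are closed, and the closed range theorem gives that $\ran(\papia_k^*)$ is closed iff $\ran(\papia_k)$ is closed. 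Running over all $k$ (the end terms $\papia_0,\papia_{N+1}$ contributing trivially) yields that $\ran(S)$ is closed iff $\ran(\papia_k)$ is closed for all $k=1,\dots,N$, which is precisely the closedness of $(\papia)$.

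For (2), the crucial step is a norm identity. Starting from the description of $\dom(S)$ as the Cartesian product of the intersection domains $\dom(\papia_k)\cap\dom(\papia_{k-1}^*)$ and computing the graph norm of $S$, the orthogonality $\ran(\papia_{k-1})\perp\ran(\papia_k^*)$ lets me split each component of $Su$ by Pythagoras and, after reindexing, regroup the resulting terms by component, obtaining
\[
\norm{u}_H^2+\norm{Su}_H^2=\sum_{k=1}^{N+1}\Bigl(\norm{u_k}_{H_k}^2+\norm{\papia_k u_k}_{H_{k+1}}^2+\norm{\papia_{k-1}^* u_k}_{H_{k-1}}^2\Bigr)
\]
with the conventions $\papia_0=\papia_{N+1}=0$. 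The right-hand side is exactly the product of the graph norms of the spaces $\dom(\papia_k)\cap\dom(\papia_{k-1}^*)$, so the embedding $\dom(S)\hookrightarrow H$ is isometrically identified with the finite direct sum of the embeddings $\dom(\papia_k)\cap\dom(\papia_{k-1}^*)\hookrightarrow H_k$. A finite direct sum of operators is compact iff every summand is compact -- sufficiency being the standard fact that a finite sum of compact operators is compact, and necessity following by precomposing with the isometric inclusion of a single factor and postcomposing with the corresponding coordinate projection -- so $\dom(S)\hookrightarrow H$ is compact iff each of these embeddings is compact, i.e.\ iff $(\papia)$ is compact.

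The step I expect to be the main obstacle is the bookkeeping behind this norm identity: one must verify that the graph-norm topology of $S$ really coincides with the product of the intersection-domain topologies, and in particular that the Pythagorean splitting of $\norm{Su}_H^2$ assigns each term $\norm{\papia_k u_k}^2$ and $\norm{\papia_{k-1}^* u_k}^2$ to the correct component. Once this identification is secured, both equivalences follow at once from the product and direct-sum principles above.
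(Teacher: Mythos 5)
Your proposal is correct and follows exactly the route the paper intends: its entire proof is the instruction ``Use Remark \ref{rem:domkerran} and orthogonality,'' and your argument is precisely a detailed execution of that, reading off $\ran(S)$ and $\dom(S)$ componentwise from the remark and using the orthogonality $\ran(\papia_{k-1})\perp\ran(\papia_k^*)$ (plus the closed range theorem and the graph-norm identity) to reduce to the factors. Nothing in your write-up deviates from or adds a gap to the paper's argument.
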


\begin{proof}
Use Remark \ref{rem:domkerran} and orthogonality. 
\end{proof}

\begin{rem}
$S^{2}=\sum_{k=1}^{N}S_{k}^{2}$ is diagonal and may be considered
as generalised Laplacian acting on $H$. More precisely, 
\begin{align*}
-S^{2} & =\left(\begin{array}{ccccccc}
\papia_{1}^{*}\papia_{1} & 0 & \cdots & \cdots & \cdots & \cdots & 0\\
0 & \papia_{1}\papia_{1}^{*}+\papia_{2}^{*}\papia_{2} & 0 &  &  &  & \vdots\\
\vdots & 0 & \ddots & \ddots &  &  & \vdots\\
\vdots &  & \ddots & \papia_{k-1}\papia_{k-1}^{*}+\papia_{k}^{*}\papia_{k} & \ddots &  & \vdots\\
\vdots &  &  & \ddots & \ddots & 0 & \vdots\\
\vdots &  &  &  & 0 & \papia_{N-1}\papia_{N-1}^{*}+\papia_{N}^{*}\papia_{N} & 0\\
0 & \cdots & \cdots & \cdots & \cdots & 0 & \papia_{N}\papia_{N}^{*}
\end{array}\right),\\
-S_{k}^{2} & =\left(\begin{array}{cccccccc}
0 & 0 & \cdots & \cdots & \cdots & \cdots & \cdots & 0\\
0 & \ddots & \ddots &  &  &  &  & \vdots\\
\vdots & \ddots & 0 & 0 &  &  &  & \vdots\\
\vdots &  & 0 & \papia_{k}^{*}\papia_{k} & 0 &  &  & \vdots\\
\vdots &  &  & 0 & \papia_{k}\papia_{k}^{*} & 0 &  & \vdots\\
\vdots &  &  &  & 0 & 0 & \ddots & \vdots\\
\vdots &  &  &  &  & \ddots & \ddots & 0\\
0 & \cdots & \cdots & \cdots & \cdots & \cdots & 0 & 0
\end{array}\right)
\end{align*}
\end{rem}

\begin{rem}
By replacing the skew-selfadjoint operators with selfadjoint operators
the presented theory works literally as well. The only modifications
are 
\[
S\coloneqq2\sym A=A+A^{*}=\sum_{k=1}^{N}S_{k},\qquad S_{k}\coloneqq2\sym A_{k}=A_{k}+A_{k}^{*}
\]
resulting in 
\begin{align*}
S & =\left(\begin{array}{ccccc}
0 & \papia_{1}^{*} & 0 & \cdots & 0\\
\papia_{1} & 0 & \ddots & \ddots & \vdots\\
0 & \ddots & \ddots & \ddots & 0\\
\vdots & \ddots & \ddots & 0 & \papia_{N}^{*}\\
0 & \cdots & 0 & \papia_{N} & 0
\end{array}\right), & S_{k} & =\left(\begin{array}{cccccccc}
0 & 0 & \cdots & \cdots & \cdots & \cdots & \cdots & 0\\
0 & \ddots & \ddots &  &  &  &  & \vdots\\
\vdots & \ddots & \ddots & 0 &  &  &  & \vdots\\
\vdots &  & 0 & 0 & \papia_{k}^{*} &  &  & \vdots\\
\vdots &  &  & \papia_{k} & 0 & 0 &  & \vdots\\
\vdots &  &  &  & 0 & \ddots & \ddots & \vdots\\
\vdots &  &  &  &  & \ddots & \ddots & 0\\
0 & \cdots & \cdots & \cdots & \cdots & \cdots & 0 & 0
\end{array}\right).
\end{align*}
This is in a sense a matter of taste. We prefer, however, the skew-selfadjoint
setting, since it has the advantage of being closer to various applications,
such as Maxwell's and Dirac's equation (written in real form). We
note, in particular, that skew-selfadjointness is at the heart of
energy conservation. 
\end{rem}

\begin{rem}
\label{rem:toolboxgenmain}
By standard arguments of linear functional analysis we note the following results:
Let $S:\dom(S)\subset H\to H$ be skew-selfadjoint
such that $\dom(S)\hookrightarrow H$ is compact. Then
\begin{enumerate}
\item
the range $\ran(S)=\ran(\widehat{S})$ is closed, 
where $\widehat{S}=\iota_{\ran(S)}^{*}S\iota_{\ran(S)}$ .
\item
the inverse operator $\widehat{S}^{-1}$ is compact.
\item
the cohomology group $\ker(S)$ has finite dimension.
\item
the orthogonal Helmholtz-type decomposition 
$H=\ran(S)\oplus_{H}\ker(S)$ holds.
\item
there exists $c>0$ such that for all
$x\in\dom\big(\widehat{S}\big)
=\dom(S)\cap\ran(S)
=\dom(S)\cap\ker(S)^{\bot_{H}}$
the Friedrichs/Poincar\'e type inequality 
$\norm{x}_{H}\leq c\norm{S x}_{H}$
holds.
\item
$S$ and $\widehat{S}$ are Fredholm operators with index zero.
\end{enumerate}
\end{rem}

%
%

\section{\label{sec:Applications}Applications}

In this final section we give several examples of annihilating sets
of skew-selfadjoint operators, i.e., of Hilbert complexes, cf.~Theorem
\ref{thm:hilcomset}. All operators will be considered as closures
of unbounded linear operators densely defined on smooth and compactly
supported test fields. For example, $\mathring{\grad}$, $\mathring{\sym\Curl}_{\T}$,
and $\mathring{\dv\Div}_{\S}$ -- where the tiny circle on top of
an operator indicates the full Dirichlet boundary condition associated
to the respective differential operator -- are the closures of 
\begin{align*}
\mathring{\grad}^{\infty}:\mathring{C}^{\infty}(\Omega)\subset L^{2}(\Omega) & \to L^{2}(\Omega); & u & \mapsto\grad u,\\
\mathring{\sym\Curl}_{\T}^{\infty}:\mathring{C}_{\T}^{\infty}(\Omega)\subset L_{\T}^{2}(\Omega) & \to L_{\S}^{2}(\Omega); & M & \mapsto\sym\Curl M,\\
\mathring{\dv\Div}_{\S}^{\infty}:\mathring{C}_{\S}^{\infty}(\Omega)\subset L_{\S}^{2}(\Omega) & \to L^{2}(\Omega); & M & \mapsto\dv\Div M,
\end{align*}
where $\Omega\subset\R^{3}$ is an open set, $\mathring{C}^{\infty}(\Omega)$
denotes the space of smooth and compactly supported fields in $\Omega$,
and $\S$ and $\T$ indicate symmetric and deviatoric tensor fields,
respectively. The corresponding adjoints $-\dv$, $\Curl_{\S}$, and
$\Grad\grad$ are then given by 
\begin{align*}
-\dv:H(\dv,\Omega)\subset L^{2}(\Omega) & \to L^{2}(\Omega); & E & \mapsto-\dv E,\\
\Curl_{\S}:H_{\S}(\Curl,\Omega)\subset L_{\S}^{2}(\Omega) & \to L_{\T}^{2}(\Omega); & M & \mapsto\Curl M,\\
\Grad\grad:H^{2}(\Omega)\subset L^{2}(\Omega) & \to L_{\S}^{2}(\Omega); & u & \mapsto\Grad\grad u.
\end{align*}

\subsection{The Classical de Rham Complexes}

\subsubsection{De Rham Complex of Vector Fields}

Let $\Omega$ be an open set in $\R^{3}$ with boundary $\Gamma:=\partial\Omega$.
The most prominent example is the classical de Rham complex of vector
fields involving the classical operators of vector calculus $\grad$,
$\curl$, and $\dv$ with full Dirichlet or Neumann boundary conditions:
\[
S_{\mathsf{Dir}}=\left(\begin{array}{ccccc}
0 & \dv & 0 & 0\\
\mathring{\grad} & 0 & -\curl & 0\\
0 & \mathring{\curl} & 0 & \grad\\
0 & 0 & \mathring{\dv} & 0
\end{array}\right),\qquad S_{\mathsf{Neu}}=\left(\begin{array}{ccccc}
0 & \mathring{\dv} & 0 & 0\\
\grad & 0 & -\mathring{\curl} & 0\\
0 & \curl & 0 & \mathring{\grad}\\
0 & 0 & \dv & 0
\end{array}\right)
\]
Inhomogeneous and anisotropic coefficients and mixed boundary conditions
can also be considered: 
\begin{align}
S_{\mathsf{mix}}=\left(\begin{array}{cccc}
0 & \nu^{-1}\mathring{\dv}_{\Gamma_{\mathsf{1}}}\eps & 0 & 0\\
\mathring{\grad}_{\Gamma_{0}} & 0 & -\eps^{-1}\mathring{\curl}_{\Gamma_{1}} & 0\\
0 & \mu^{-1}\mathring{\curl}_{\Gamma_{0}} & 0 & \mathring{\grad}_{\Gamma_{1}}\\
0 & 0 & \kappa^{-1}\mathring{\dv}_{\Gamma_{0}}\mu & 0
\end{array}\right)\label{eq:derhammix}
\end{align}
Here the boundary $\Gamma$ is decomposed into two parts $\Gamma_{0}$
and $\Gamma_{1}$ where the Dirichlet and Neumann boundary condition
is imposed, respectively. Note that the de Rham off-diagonals are
skew-adjoint to each other.

\subsubsection{De Rham Complex of Differential Forms}

Let $\Omega$ be an $N$-dimensional Riemannian manifold, e.g., an
open set in $\R^{N}$. Another prominent example is the classical
de Rham complex of differential forms involving the exterior derivative
$\ed$ and its formal skew-adjoint the co-derivative $\cd=-\mathring{\ed}^{*}$,
$\ed=-\mathring{\cd}^{*}$ with full Dirichlet or Neumann boundary
conditions: 
\[
S_{\mathsf{Dir}}=\left(\begin{array}{ccccc}
0 & \cd & 0 & \cdots & 0\\
\mathring{\ed} & \ddots & \ddots & \ddots & \vdots\\
0 & \ddots & \ddots & \ddots & 0\\
\vdots & \ddots & \ddots & \ddots & \cd\\
0 & \cdots & 0 & \mathring{\ed} & 0
\end{array}\right),\qquad S_{\mathsf{Neu}}=\left(\begin{array}{ccccc}
0 & \mathring{\cd} & 0 & \cdots & 0\\
\ed & \ddots & \ddots & \ddots & \vdots\\
0 & \ddots & \ddots & \ddots & 0\\
\vdots & \ddots & \ddots & \ddots & \mathring{\cd}\\
0 & \cdots & 0 & \ed & 0
\end{array}\right)
\]
Again, the de Rham off-diagonals are skew-adjoint to each other. Note
that $S_{\mathsf{Dir}}$ and $S_{\mathsf{Neu}}$ are unitarily congruent
via transposition, permutation, sign change and Hodge-$*$-isomorphism.

\subsection{Other Complexes in Three Dimensions}


There are plenty of extensions and restrictions of the de Rham complex.
A nice overview and list of complexes is given in \cite{AH2021a},
from which we extract the following discussion. Their construction
is based on the BGG-resolution using copies of the de Rham complex.
For this section let $\Omega$ be an open set in $\R^{3}$.

\subsubsection{More De Rham Complexes}
\begin{itemize}
\item $\Grad\curl$ complex: 
\[
S_{\mathsf{Dir}}=\left(\begin{array}{cccccc}
0 & \dv & 0 & 0 & 0\\
\mathring{\grad} & 0 & \curl\Div_{\T} & 0 & 0\\
0 & \mathring{\Grad\curl} & 0 & -\dev\Curl & 0\\
0 & 0 & \mathring{\Curl}_{\T} & 0 & \Grad\\
0 & 0 & 0 & \mathring{\Div} & 0
\end{array}\right)
\]
\item $\curl\Div$ complex (formal dual of the $\Grad\curl$ complex): 
\[
S_{\mathsf{Dir}}=\left(\begin{array}{cccccc}
0 & \Div & 0 & 0 & 0\\
\mathring{\Grad} & 0 & -\Curl_{\T} & 0 & 0\\
0 & \mathring{\dev\Curl} & 0 & \Grad\curl & 0\\
0 & 0 & \mathring{\curl\Div}_{\T} & 0 & \grad\\
0 & 0 & 0 & \mathring{\dv} & 0
\end{array}\right)
\]
\item $\grad\dv$ complex (formally self-dual): 
\[
S_{\mathsf{Dir}}=\left(\begin{array}{cccccc}
0 & \dv & 0 & 0 & 0 & 0\\
\mathring{\grad} & 0 & -\curl & 0 & 0 & 0\\
0 & \mathring{\curl} & 0 & -\grad\dv & 0 & 0\\
0 & 0 & \mathring{\grad\dv} & 0 & -\curl & 0\\
0 & 0 & 0 & \mathring{\curl} & 0 & \grad\\
0 & 0 & 0 & 0 & \mathring{\dv} & 0
\end{array}\right)
\]
\end{itemize}

\subsubsection{Elasticity Complexes}
\begin{itemize}
\item Kr\"oner complex (formally self-dual): 
\[
S_{\mathsf{Dir}}=\left(\begin{array}{ccccc}
0 & \Div_{\S} & 0 & 0\\
\mathring{\sym\Grad} & 0 & -\Curl\top\Curl_{\S} & 0\\
0 & \mathring{\Curl\top\Curl}_{\S} & 0 & \sym\Grad\\
0 & 0 & \mathring{\Div}_{\S} & 0
\end{array}\right)
\]
Here $\top$ denotes the formal transpose. 
\item deviatoric Kr\"oner complex (formally self-dual): 
\[
S_{\mathsf{Dir}}=\left(\begin{array}{ccccc}
0 & \Div_{\S\T} & 0 & 0\\
\mathring{\dev\sym\Grad} & 0 & -\Curl\widetilde{\top}\Curl\widetilde{\top}\Curl_{\S\T} & 0\\
0 & \mathring{\Curl\widetilde{\top}\Curl\widetilde{\top}\Curl_{\S\T}} & 0 & \dev\sym\Grad\\
0 & 0 & \mathring{\Div}_{\S\T} & 0
\end{array}\right)
\]
Here\footnote{In $\mathbb{R}^{N}$ we have $\widetilde{\top}M:=M^{\top}-\frac{1}{N-1}(\tr M)\id$.}
$\widetilde{\top}M:=M^{\top}-\frac{1}{2}(\tr M)\id$. Note that $\widetilde{\top}\Curl_{\S}=\top\Curl_{\S}$
as $\tr\Curl_{\S}=0$.


\end{itemize}

\subsubsection{Biharmonic Complexes}
\begin{itemize}
\item first Hessian complex: 
\[
S_{\mathsf{Dir}}=\left(\begin{array}{ccccc}
0 & -\dv\Div_{\S} & 0 & 0\\
\mathring{\Grad\grad} & 0 & -\sym\Curl_{\T} & 0\\
0 & \mathring{\Curl}_{\S} & 0 & \dev\Grad\\
0 & 0 & \mathring{\Div}_{\T} & 0
\end{array}\right)
\]
\item second Hessian complex (formal dual of the first Hessian complex):
\[
S_{\mathsf{Dir}}=\left(\begin{array}{ccccc}
0 & \Div_{\T} & 0 & 0\\
\mathring{\dev\Grad} & 0 & -\Curl_{\S} & 0\\
0 & \mathring{\sym\Curl}_{\T} & 0 & -\Grad\grad\\
0 & 0 & \mathring{\dv\Div}_{\S} & 0
\end{array}\right)
\]
\item conformal Hessian complex (formally self-dual): 
\[
S_{\mathsf{Dir}}=\left(\begin{array}{ccccc}
0 & -\dv\Div_{\S\T} & 0 & 0\\
\mathring{\dev\Grad\grad} & 0 & -\sym\Curl_{\S\T} & 0\\
0 & \mathring{\sym\Curl}_{\S\T} & 0 & -\dev\Grad\grad\\
0 & 0 & \mathring{\dv\Div}_{\S\T} & 0
\end{array}\right)
\]
\end{itemize}

\subsection{Some Remarks}
\begin{rem}
Theorem \ref{thm:hilcomset} shows that all operators $S_{\cdots}=S_{\mathsf{Dir/Neu/mix}}$
are sums of annihilating skew-selfadjoint operators $S_{1},\dots,S_{N}$,
i.e., 
\[
S_{\cdots}=\sum_{k=1}^{N}S_{k}.
\]
In particular, we have for the Dirichlet de Rham complex 
\begin{align*}
S_{\mathsf{Dir}} & =\left(\begin{array}{ccccc}
0 & \dv & 0 & 0\\
\mathring{\grad} & 0 & -\curl & 0\\
0 & \mathring{\curl} & 0 & \grad\\
0 & 0 & \mathring{\dv} & 0
\end{array}\right)\\
 & =\left(\begin{array}{ccccc}
0 & \dv & 0 & 0\\
\mathring{\grad} & 0 & 0 & 0\\
0 & 0 & 0 & 0\\
0 & 0 & 0 & 0
\end{array}\right)+\left(\begin{array}{ccccc}
0 & 0 & 0 & 0\\
0 & 0 & -\curl & 0\\
0 & \mathring{\curl} & 0 & 0\\
0 & 0 & 0 & 0
\end{array}\right)+\left(\begin{array}{ccccc}
0 & 0 & 0 & 0\\
0 & 0 & 0 & 0\\
0 & 0 & 0 & \grad\\
0 & 0 & \mathring{\dv} & 0
\end{array}\right),\\
-S_{\mathsf{Dir}}^{2} & =\left(\begin{array}{ccccc}
-\dv\mathring{\grad} & 0 & 0 & 0\\
0 & -\mathring{\grad}\dv+\curl\mathring{\curl} & 0 & 0\\
0 & 0 & \mathring{\curl}\curl-\grad\mathring{\dv} & 0\\
0 & 0 & 0 & -\mathring{\dv}\grad
\end{array}\right)\\
 & =\left(\begin{array}{ccccc}
-\dv\mathring{\grad} & 0 & 0 & 0\\
0 & -\mathring{\grad}\dv & 0 & 0\\
0 & 0 & 0 & 0\\
0 & 0 & 0 & 0
\end{array}\right)+\left(\begin{array}{ccccc}
0 & 0 & 0 & 0\\
0 & \curl\mathring{\curl} & 0 & 0\\
0 & 0 & \mathring{\curl}\curl & 0\\
0 & 0 & 0 & 0
\end{array}\right)\\
 & \qquad+\left(\begin{array}{ccccc}
0 & 0 & 0 & 0\\
0 & 0 & 0 & 0\\
0 & 0 & -\grad\mathring{\dv} & 0\\
0 & 0 & 0 & -\mathring{\dv}\grad
\end{array}\right).
\end{align*}
\end{rem}

\begin{rem}
Recalling Theorem \ref{theo:Siffa} it has been shown in \cite{BPS2016a,BPS2019a,PS2022a}
that in case of the de Rham complexes the embeddings 
\[
\dom(S_{\cdots})\hookrightarrow H
\]
are compact, provided that $(\Omega,\Gamma_{0})$ is a bounded weak
Lipschitz pair, see \cite{W1974a,W1980a,P1984a,W1993a} and \cite{J1997a,FG1997a}
for the first results about the respective compact embeddings. For
corresponding results in case of elasticity and biharmonic complexes
and bounded strong Lipschitz pairs $(\Omega,\Gamma_{0})$ see \cite{PS2022b,PS2023a}
and \cite{PZ2020a,PZ2022a}. 
\end{rem}

\begin{rem}
There is no doubt that all the latter complexes may be generalised
to inhomogeneous and anisotropic coefficients and to mixed boundary
conditions, cf.~\eqref{eq:derhammix}. Moreover, the techniques of
\cite{PS2022a,PS2022b,PS2023a} (for the de Rham, Kr\"oner, and Hessian
complexes) can be extended to show that all the embeddings $\dom(S_{\cdots})\hookrightarrow H$
are compact for bounded strong Lipschitz pairs $(\Omega,\Gamma_{0})$. 
\end{rem}

\subsection{A Factorization Result}

An interesting consequence for annihilating sets of skew-selfadjoint
operators (not necessarily tridiagonal operator matrices as in the
special case) is the following factorization result: Let $T$ be a
strictly m-accretive operator commuting with $\mathS=\left\{ S_{1},\dots,S_{N}\right\} $.
Then with $S=\sum_{k=1}^{N}S_{k}$ we have 
\[
\left(T+S\right)=T^{1-N}T^{N-1}\Big(T+\sum_{k=1}^{N}S_{k}\Big)=T^{1-N}\prod_{k=1}^{N}\left(T+S_{k}\right)
\]
and conversely, 
\begin{align*}
T+S_{\ell} & =T^{N-1}\prod_{\ell\neq k=1}^{N}\left(T+S_{k}\right)^{-1}\left(T+S\right)=\prod_{\ell\neq k=1}^{N}\left(1+T^{-1}S_{k}\right)^{-1}\left(T+S\right).
\end{align*}
This is, in terms of inverses (solution operators) 
\begin{align*}
\left(T+S\right)^{-1}=T^{N-1}\prod_{k=1}^{N}\left(T+S_{k}\right)^{-1},\qquad\left(T+S_{\ell}\right)^{-1}=\prod_{\ell\neq k=1}^{N}\left(1+T^{-1}S_{k}\right)\left(T+S\right)^{-1}.
\end{align*}
An example of particular interest is given by $T=\partial_{t}$, the
case of evolutionary systems, which in a suitable setting, see e.g.
\cite{PTW2017a}, leads to 
\begin{align*}
(\partial_{t}+S) & =\partial_{t}^{1-N}\prod_{k=1}^{N}(\partial_{t}+S_{k}), & \partial_{t}+S_{\ell} & =\prod_{\ell\neq k=1}^{N}(1+\partial_{t}^{-1}S_{k})^{-1}(\partial_{t}+S),\\
(\partial_{t}+S)^{-1} & =\partial_{t}^{N-1}\prod_{k=1}^{N}(\partial_{t}+S_{k})^{-1}, & (\partial_{t}+S_{\ell})^{-1} & =\prod_{\ell\neq k=1}^{N}(1+\partial_{t}^{-1}S_{k})(\partial_{t}+S)^{-1}.
\end{align*}

\section*{Funding/Conflict of Interest Statement} 
The authors declare that they have no conflict of interest 
and that they obtained no funding.



\global\long\def\Sone{\left(\begin{array}{cccccccc}
0 & -\papia_{1}^{*} & 0 & 0\\
\papia_{1} & 0 & 0 & 0\\
0 & 0 & 0 & 0\\
0 & 0 & 0 & 0
\end{array}\right)}%
 
\global\long\def\Stwo{\left(\begin{array}{cccccccc}
0 & 0 & 0 & 0\\
0 & 0 & -\papia_{2}^{*} & 0\\
0 & \papia_{2} & 0 & 0\\
0 & 0 & 0 & 0
\end{array}\right)}%
 
\global\long\def\Sthree{\left(\begin{array}{cccccccc}
0 & 0 & 0 & 0\\
0 & 0 & 0 & 0\\
0 & 0 & 0 & -\papia_{3}^{*}\\
0 & 0 & \papia_{3} & 0
\end{array}\right)}%

\appendix

\section{Sketch of a Proof of Theorem \ref{thm:hilcomset}}

\label{app:proofs}

For simplicity and readability we look at the special case $N=3$
and consider $(\papia_{1},\papia_{2},\papia_{3})$. Then 
\begin{align*}
S_{1}S_{1} & =\Sone\Sone=\left(\begin{array}{cccccccc}
-\papia_{1}^{*}\papia_{1} & 0 & 0 & 0\\
0 & -\papia_{1}\papia_{1}^{*} & 0 & 0\\
0 & 0 & 0 & 0\\
0 & 0 & 0 & 0
\end{array}\right),\\
S_{1}S_{2} & =\Sone\Stwo=\left(\begin{array}{cccccccc}
0 & 0 & \papia_{1}^{*}\papia_{2}^{*} & 0\\
0 & 0 & 0 & 0\\
0 & 0 & 0 & 0\\
0 & 0 & 0 & 0
\end{array}\right),\\
S_{1}S_{3} & =\Sone\Sthree=\left(\begin{array}{cccccccc}
0 & 0 & 0 & 0\\
0 & 0 & 0 & 0\\
0 & 0 & 0 & 0\\
0 & 0 & 0 & 0
\end{array}\right),\\
S_{2}S_{1} & =\Stwo\Sone=\left(\begin{array}{cccccccc}
0 & 0 & 0 & 0\\
0 & 0 & 0 & 0\\
\papia_{2}\papia_{1} & 0 & 0 & 0\\
0 & 0 & 0 & 0
\end{array}\right),\\
S_{2}S_{2} & =\Stwo\Stwo=\left(\begin{array}{cccccccc}
0 & 0 & 0 & 0\\
0 & -\papia_{2}^{*}\papia_{2} & 0 & 0\\
0 & 0 & -\papia_{2}\papia_{2}^{*} & 0\\
0 & 0 & 0 & 0
\end{array}\right),\\
S_{2}S_{3} & =\Stwo\Sthree=\left(\begin{array}{cccccccc}
0 & 0 & 0 & 0\\
0 & 0 & 0 & \papia_{2}^{*}\papia_{3}^{*}\\
0 & 0 & 0 & 0\\
0 & 0 & 0 & 0
\end{array}\right),\\
S_{3}S_{1} & =\Sthree\Sone=\left(\begin{array}{cccccccc}
0 & 0 & 0 & 0\\
0 & 0 & 0 & 0\\
0 & 0 & 0 & 0\\
0 & 0 & 0 & 0
\end{array}\right),\\
S_{3}S_{2} & =\Sthree\Stwo=\left(\begin{array}{cccccccc}
0 & 0 & 0 & 0\\
0 & 0 & 0 & 0\\
0 & 0 & 0 & 0\\
0 & \papia_{3}\papia_{2} & 0 & 0
\end{array}\right),\\
S_{3}S_{3} & =\Sthree\Sthree=\left(\begin{array}{cccccccc}
0 & 0 & 0 & 0\\
0 & 0 & -\papia_{3}^{*}\papia_{3} & 0\\
0 & 0 & 0 & -\papia_{3}\papia_{3}^{*}
\end{array}\right).
\end{align*}
We read off that $(\papia_{1},\papia_{2},\papia_{3})$ is a Hilbert
complex if and only if $S_{k}S_{\ell}=0$ for all $k\neq\ell$. 


\vspace*{5mm}
\hrule 
\vspace*{3mm}


\end{document}